\newtheorem{theorem}{Theorem}
\newtheorem{corollary}[theorem]{Corollary}
\begin{document}

\title{New partition function recurrences}

\author{Robson da Silva  \\
	Universidade Federal de S\~ao Paulo \\
	S\~ao Jos\'e dos Campos - SP, 12247-014, Brazil \\ silva.robson@unifesp.br \vspace{0.25cm} \\
Pedro Diniz Sakai \\
Universidade Federal de S\~ao Paulo \\
S\~ao Jos\'e dos Campos - SP, 12247-014, Brazil \\ pedro.sakai@unifesp.br 
}

\date{}
\maketitle

\begin{abstract}
We present Euler-type recurrence relations for some partition functions. Some of our results present new recurrences for the number of unrestricted partitions of $n$, denote by $p(n)$. Others establish recurrences for partition functions not yet considered. 
\end{abstract}

\noindent {\bf keywords}: Partition, overpartition, recurrence relation, generating function

\noindent {\bf MSC}: 05A17, 11P81

\section{Introduction} 
\label{sect1}

A partition of an integer $n$ is a finite set of positive integers $\{ \lambda_{1}, \ldots, \lambda_{s}\}$ such that $n = \lambda_{1} + \cdots + \lambda_{s}$. The $\lambda_{i}$s are called the parts of the partition. The number of partitions of $n$ is usually denote by $p(n)$, with $p(0)=1$ by convention. For example, we have $p(4) = 5$ since there are five partitions of 4, namely:
$$4, 3 + 1, 2 + 2, 2 + 1 + 1, 1 + 1 + 1 + 1.$$
The generating function of $p(n)$, due to Euler, is given by (see \cite[eq. (1.1.6)]{Berndt})
\begin{equation}
\sum_{n=0}^{\infty} p(n)q^n = \prod_{k=1}^{\infty} \frac{1}{1-q^k}.
\label{GF}
\end{equation}
So, one can obtain the values of $p(n)$ by expanding the right-hand side of \eqref{GF} and extracting the coefficient of $q^n$. Another way to obtain $p(n)$ was found by Euler after he proved the following identity (known as Euler's pentagonal number theorem):
\begin{equation}
\sum_{n=-\infty}^{\infty} (-1)^n q^{n(3n-1)/2} = \prod_{k=1}^{\infty} (1-q^{k}).
\label{PNT} 
\end{equation}
Indeed, multiplying \eqref{GF} and \eqref{PNT} we obtain
\begin{equation*}
\sum_{n=0}^{\infty} p(n)q^n \sum_{n=-\infty}^{\infty} (-1)^n q^{n(3n-1)/2} = 1,
\end{equation*}
from which the following recurrence for $p(n)$ is derived after extracting the coefficient of $q^n$ from its both sides:
\begin{align*}
p(n)-p(n-1)-p(n-2)+p(n-5)+p(n-7)-p(n-12)-p(n-15) \\
 + \cdots + (-1)^jp(n-j(3j-1)/2) + (-1)^jp(n-j(3j+1)/2) + \cdots \\
= \begin{cases}
1, & \mbox{if $n=0$,} \\
0, & \mbox{otherwise.}
\end{cases}
\end{align*}
The numbers $j(3j\pm 1)/2$ are the pentagonal numbers.% \cite[A001318]{Sloane}.

Some subsequent works bring new recurrence relations for $p(n)$ and other partition functions. Ewell \cite[Theorem 2]{Ewell}, for instance, presents the following recurrence for $p(n)$ involving the triangular numbers % \cite[A000217]{Sloane}
\begin{align*}
p(n)-p(n-1)-p(n-3)+p(n-6)+p(n-10)-p(n-15)-p(n-21) \\
+ \cdots + (-1)^jp(n-j(2j-1)) + (-1)^jp(n-j(2j+1)) + \cdots \\
= \begin{cases}
0, & \mbox{if $n$ is odd,} \\
q(n/2), & \mbox{if $n$ is even,}
\end{cases}
\end{align*}
where $q(n)$ denotes the number of partitions of $n$ into distinct parts. Merca \cite{Merca} derived two new recurrence relations for $p(n)$, which allowed him to obtain a more efficient method to compute the parity of $p(n)$. Ono, Robbins, and Wilson \cite{Ono} presented recurrence relations for some partition functions, including $q(n)$, $q_O(n)$ the number of partitions into distinct and odd parts, $p_E(n)$ the number of partitions into an even number of parts, and $p_O(n)$ the number of partitions into an odd number of parts. Recently, Choliy, Kolitsch, and Sills \cite{Sills} found a number of new recurrences for $p(n)$, including
\begin{align*}
p(n)-p(n-1)-p(n-2)+p(n-4)+p(n-8)-p(n-9)-p(n-18) \\
+ \cdots + (-1)^jp(n-j^2) + (-1)^jp(n-2j^2) + \cdots \\
= \begin{cases}
0, & \mbox{if $n$ is odd,} \\
qq(n), & \mbox{if $n$ is even,}
\end{cases}
\end{align*}
and
\begin{align*}
p(n)-2p(n-1)+2p(n-4)-2p(n-9)+2p(n-16) + \cdots \\
+ (-1)^j2p(n-j^2) + \cdots = (-1)^n qq(n).
\end{align*}
where $qq(n)$ denotes the number of partitions of $n$ into distinct odd parts. Additional recurrence relations for partition functions can be found in \cite{Sills,Ewell,Ewell1,Merca,N,Ono}. 

In this paper, using some classical identities and generating function manipulations, we provide a number of new recurrence relations for $p(n)$, $qq(n)$, $\overline{p}(n)$ the number of overpartitions of $n$, $p_o(n)$ the number of partitions of $n$ into odd parts, and $p^c_m(n)$ the number of partitions of $n$ into parts congruent to $\pm c$ modulo $m$. {For some of these functions, it is the first time that recurrence relations are presented.}

\section{Preliminaries}
\label{Sect_prelim}

We recall Ramanujan's theta functions
\begin{align}
f(a,b) & :=\sum_{n=-\infty}^\infty a^\frac{n(n+1)}{2}b^\frac{n(n-1)}{2}, \mbox{ for } |ab|<1, \label{eq1} \\
%\phi(q) & := f(q,q) = \sum_{n=-\infty}^{\infty} q^{n^2} = \frac{f_2^5}{f_1^2f_4^2}, \label{Rama1} \\
\psi(q) & := f(q,q^3) = \sum_{n=0}^{\infty} q^{n(n+1)/2}. \label{eq3}
\end{align}

In the proofs of some of our results, we will need Jacobi triple product identity given by (see \cite[Theorem 1.3.3]{Berndt})
\begin{align*}
\sum_{n=-\infty}^{\infty} z^nq^{n^2}=(-zq;q^2)_\infty(-q/z;q^2)_\infty(q^2;q^2)_\infty.
\end{align*}
where we use the following standard $q$-series notation:
\begin{align*}
(a;q)_0 & = 1, \\
(a;q)_n & = (1-a)(1-aq) \cdots (1-aq^{n-1}), \forall n \geq 1, \\ 
(a;q)_{\infty} & = \lim_{n \to \infty} (a;q)_n, |q|<1.
\end{align*}	
Using \eqref{eq1}, we can rewrite Jacobi triple product identity in the form
\begin{align}
f(a,b) = (-a;ab)_\infty(-b;ab)_\infty(ab;ab)_\infty.
\label{eq5}
\end{align}
	
An important consequence of \eqref{eq5} is following identity (see \cite[Eq. (1.3.14)]{Berndt})
\begin{align}
\psi(q) = \frac{(q^2;q^2)_\infty}{(q;q^2)_\infty}.
\label{eq7}
\end{align}

We also recall the well-known Euler's pentagonal number theorem (see \cite[Corollary 1.3.5]{Berndt}):
\begin{equation}
(q;q)_\infty = \sum_{n=-\infty}^{\infty} (-1)^n q^{n(3n-1)/2}.
\label{Euler}
\end{equation}

\section{Main results}
\label{Sect_main}

In what follows, we let $t_j^e$ (resp. $t_j^o$) denote the $j$-th even (resp. odd) triangular number. So, $t_1^e = 0$, $t_1^o = 1$, $t_2^e = 3$, $t_2^o = 6$, $t_3^e = 10$, $t_3^o = 15$, etc.

\begin{theorem} For all even integer $n \geq 1$, we have
	\begin{align}
	& p(n/2)+p((n-6)/2)+p((n-10)/2)+p((n-28)/2)+p((n-36)/2)+ \nonumber \\
	&p((n-66)/2)+p((n-78)/2)+\cdots+p((n-t^e_j)/2)+\cdots= p_d(n),
\label{Eq2}
	\end{align}	
where $p_d(n)$ denotes the number of partitions of $n$ into distinct parts. For all odd integer $n \geq 1$, we have
	\begin{align}
	& p((n-1)/2)+p((n-3)/2)+p((n-15)/2)+p((n-21)/2)+ \nonumber \\
	&p((n-45)/2)+p((n-55)/2)+\cdots+p((n-t^o_j)/2)+\cdots= p_o(n),
\label{Eq3}
	\end{align}
where $p_o(n)$ denotes the number of partitions of $n$ into odd parts
	\label{Teo3.2}
\end{theorem}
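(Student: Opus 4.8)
The plan is to capture both recurrences simultaneously through a single product of generating functions, and then to separate the two cases according to the parity of $n$. The starting observation is that $\psi(q)=\sum_{k\ge 0}q^{k(k+1)/2}$ is precisely the generating function recording all triangular numbers $T_k=k(k+1)/2$, and that a given term $q^{T_k}$ sits in an even or odd power of $q$ exactly when $T_k$ is an even or odd triangular number. Thus the even triangular numbers $t^e_j$ account for the even-degree part of $\psi(q)$, while the odd triangular numbers $t^o_j$ account for its odd-degree part.

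Next I would form the product
\begin{equation*}
\left(\sum_{m\ge 0}p(m)q^{2m}\right)\psi(q),
\end{equation*}
whose first factor is $1/(q^2;q^2)_\infty$ by \eqref{GF} read in the variable $q^2$. Applying \eqref{eq7} to the second factor gives
\begin{equation*}
\frac{1}{(q^2;q^2)_\infty}\cdot\frac{(q^2;q^2)_\infty}{(q;q^2)_\infty}=\frac{1}{(q;q^2)_\infty}.
\end{equation*}
The right-hand side is the generating function for partitions into odd parts, so its coefficient of $q^n$ is $p_o(n)$; by Euler's classical theorem that partitions into odd parts are equinumerous with partitions into distinct parts, this same coefficient equals $p_d(n)$ as well. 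This single computation supplies the right-hand sides of both \eqref{Eq2} and \eqref{Eq3} at once.

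It then remains to read off the coefficient of $q^n$ on the left. Expanding, the left-hand side equals $\sum_{m,k\ge 0}p(m)q^{2m+T_k}$, so the coefficient of $q^n$ is the sum of $p(m)$ over all pairs with $2m+T_k=n$. For such a pair the relation $T_k=n-2m$ forces $T_k\equiv n\pmod 2$, so only triangular numbers of the same parity as $n$ contribute, and then $m=(n-T_k)/2$. When $n$ is even this selects exactly the even triangular numbers $t^e_j$ and produces $\sum_j p((n-t^e_j)/2)$, the left-hand side of \eqref{Eq2}; when $n$ is odd it selects the odd triangular numbers $t^o_j$ and produces the left-hand side of \eqref{Eq3}. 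Comparing with the coefficient computed in the previous paragraph establishes both identities.

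The only genuinely delicate point is the parity bookkeeping: one must verify that splitting $\psi(q)$ into its even- and odd-degree parts corresponds precisely to the sequences $t^e_j$ and $t^o_j$, and that the constraint $2m+T_k=n$ automatically restricts the sum to triangular numbers sharing the parity of $n$, with each contributing exactly once. Everything else follows directly from \eqref{eq7} together with Euler's identity $p_o(n)=p_d(n)$.
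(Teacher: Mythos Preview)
Your argument is correct and rests on the same core identity as the paper, namely
\[
\frac{\psi(q)}{(q^2;q^2)_\infty}=\frac{1}{(q;q^2)_\infty},
\]
but the organization is noticeably leaner. The paper separates the even and odd cases at the level of generating functions: it forms the even part of $\sum p_d(n)q^n$ and the odd part of $\sum p_o(n)q^n$ via the substitutions $q\mapsto -q$, rewrites each in terms of $\tfrac12(\psi(q)\pm\psi(-q))$, and then analyzes the parity of $j(j+1)/2$ explicitly (the four residue classes modulo $4$) to identify these with $\sum q^{t^e_j}$ and $\sum q^{t^o_j}$. You instead keep a single product and let the parity constraint $T_k\equiv n\pmod 2$ do the splitting automatically at the coefficient-extraction stage, invoking Euler's theorem $p_o(n)=p_d(n)$ once to cover both right-hand sides. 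Your route is shorter and avoids the $\psi(-q)$ machinery; the paper's route has the minor advantage of not appealing to Euler's theorem, since it works with $p_d$ and $p_o$ independently.
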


\begin{proof} Initially, we note that 
	\begin{align*}
	\sum_{n=0}^{\infty}\frac{1+(-1)^n}{2}p_d(n)q^n&=\frac{1}{2}\bigg(\sum_{n=0}^{\infty} p_d(n)q^n+\sum_{n=0}^{\infty} p_d(n)(-q)^n\bigg)\\
	&=\frac{1}{2}\bigg(\prod_{k=1}^{\infty}(1+q^k)+\prod_{k=1}^{\infty} (1+(-1)^kq^k)\bigg) \\
	&=\frac{1}{2}\bigg(\prod_{k=1}^{\infty}(1+q^k)+\prod_{k=1}^{\infty} (1+q^{2k})(1-q^{2k-1})\bigg)
	\end{align*}
and
	\begin{align*}
	\prod_{k=1}^{\infty} (1+q^k)&=\prod_{k=1}^{\infty} (1+q^k)\frac{(1-q^k)}{(1-q^k)}\frac{(1-q^{2k-1})}{(1-q^{2k-1})} \\
	&=\prod_{k=1}^{\infty}\frac{(1-q^{2k-1})(1-q^{2k})}{(1-q^k)(1-q^{2k-1})} =\prod_{k=1}^{\infty}\frac{1}{1-q^{2k}}\psi(q).
	\end{align*}
We also have
	\begin{align*}
	\prod_{k=1}^{\infty} (1+q^{2k})(1-q^{2k-1})&=\prod_{k=1}^{\infty} (1+q^{2k})(1-q^{2k-1})\frac{(1-q^{2k})}{(1-q^k)(1+q^k)}\\
	&=\prod_{k=1}^{\infty} \frac{(1-q^{2k-1})(1-q^{2k})}{(1-q^k)(1+q^{2k-1})} =\prod_{k=1}^{\infty} \frac{1}{1-q^{2k}}\psi(-q).
	\end{align*}
It follows that
	\begin{align*}
	\sum_{n=0}^{\infty}\frac{1+(-1)^n}{2}p_d(n)q^n&=\frac{1}{2}\bigg(\prod_{k\geq1}\frac{1}{1-q^{2k}}\psi(q)+\prod_{k=1}^{\infty} \frac{1}{1-q^{2k}}\psi(-q)\bigg)\\
	&=\bigg(\prod_{k=1}^{\infty} \frac{1}{1-q^{2k}}\bigg)\frac{1}{2}\bigg(\psi(q)+\psi(-q)\bigg)\\
	&=\bigg(\prod_{k=1}^{\infty} \frac{1}{1-q^{2k}}\bigg)\frac{1}{2}\bigg(\sum_{j=0}^{\infty}q^{\frac{j(j+1)}{2}}+\sum_{j=0}^{\infty}(-q)^{\frac{j(j+1)}{2}}\bigg).
	\end{align*}
The parity of the exponent ${\frac{j(j+1)}{2}}$ is given by
	\begin{align}
	&\frac{4i(4i+1)}{2}= 8i^2+2i, 
	\label{eq31}\\
	&\frac{(4i-1)(4i-1+1)}{2}= 8i^2-2i, 
	\label{eq32}\\
	&\frac{(4i-2)(4i-2+1)}{2}= 8i^2-6i+1,
	\label{eq33}\\
	&\frac{(4i-3)(4i-3+1)}{2}= 8i^2-10i+3.
	\label{eq34}
	\end{align}
The even triangular numbers are given by \eqref{eq31} and \eqref{eq32}, while \eqref{eq33} and \eqref{eq34} represent the odd triangular numbers. Thus	
\begin{align}
\sum_{j=0}^{\infty}q^{\frac{j(j+1)}{2}}=\sum_{i=0}^{\infty} q^{8i^2+2i}+q^{8i^2-2i}+q^{8i^2-6i+1}+q^{8i^2-10i+3}
\label{Eq4}
\end{align}
and
\begin{align}
\sum_{j=0}^{\infty}(-q)^{\frac{j(j+1)}{2}}=\sum_{i=0}^{\infty}q^{8i^2+2i}+q^{8i^2-2i}-q^{8i^2-6i+1}-q^{8i^2-10i+3},
\label{Eq5}
\end{align}
which yields
\begin{align*}
\sum_{n=0}^{\infty}\frac{1+(-1)^n}{2}p_d(n)q^n&=\bigg(\prod_{k=1}^{\infty}\frac{1}{1-q^{2k}}\bigg)\frac{1}{2}\bigg(\sum_{i=0}^{\infty}2q^{8i^2+2i}+2q^{8i^2-2i}\bigg)\\
&=\bigg(\sum_{k=0}^{\infty}p(k)q^{2k}\bigg)\bigg(\sum_{j=0}^{\infty}q^{t^e_j}\bigg)\\
&=\sum_{k=0}^{\infty}\bigg(\sum_{j=0}^{\infty}p(k)q^{2k+t^e_j}\bigg).
\end{align*}
Now we extract the coefficient of $q^n$ on both sides of the above equation to obtain	
\begin{align*}
\sum_{n=0}^{\infty}p_d(n)q^n=\sum_{n=0}^{\infty}\bigg(\sum_{j=0}^{\infty}p((n-t^e_j)/2)\bigg)q^n,
\end{align*}
which completes the proof of \eqref{Eq2}.

In order to prove \eqref{Eq3}, we begin with
	\begin{align*}
	\sum_{n=0}^{\infty}\frac{1-(-1)^n}{2}p_o(n)q^n&=\frac{1}{2}\bigg(\sum_{n=0}^{\infty}p_o(n)q^n-\sum_{n=0}^{\infty}p_o(n)(-q)^n\bigg)\\
	&=\frac{1}{2}\bigg(\prod_{k=1}^{\infty}\frac{1}{(1-q^{2k-1})}-\prod_{k=1}^{\infty}\frac{1}{(1-(-q)^{2k-1})}\bigg) \\
	&=\frac{1}{2}\bigg(\prod_{k=1}^{\infty}\frac{1}{(1-q^{2k-1})}-\prod_{k=1}^{\infty}\frac{1}{(1+q^{2k-1})}\bigg).
	\end{align*}
We note that
	\begin{align*}
	\prod_{k=1}^{\infty}\frac{1}{(1-q^{2k-1})}&=\prod_{k=1}^{\infty}\frac{1}{(1-q^{2k-1})}\frac{(1-q^{2k})}{(1-q^{2k})} =\prod_{k=1}^{\infty}\frac{1}{1-q^{2k}}\psi(q)
	\end{align*}
and
	\begin{align*}
	\prod_{k=1}^{\infty}\frac{1}{(1+q^{2k-1})}&=\prod_{k=1}^{\infty}\frac{1}{(1+q^{2k-1})}\frac{(1-q^{2k})}{(1-q^{2k})} =\prod_{k=1}^{\infty}\frac{1}{1-q^{2k}}\psi(-q).
	\end{align*}
It follows that
	\begin{align*}
	\sum_{n=0}^{\infty}\frac{1-(-1)^n}{2}p_o(n)q^n&=\frac{1}{2}\bigg(\prod_{k=1}^{\infty}\frac{1}{1-q^{2k}}\psi(q)-\prod_{k=1}^{\infty}\frac{1}{1-q^{2k}}\psi(-q)\bigg)\\
	&=\bigg(\prod_{k=1}^{\infty}\frac{1}{1-q^{2k}}\bigg)\frac{1}{2}\bigg(\psi(q)-\psi(-q)\bigg)\\
	&=\bigg(\prod_{k=1}^{\infty}\frac{1}{1-q^{2k}}\bigg)\frac{1}{2}\bigg(\sum_{j=0}^{\infty}q^{\frac{j(j+1)}{2}}-\sum_{j=0}^{\infty}(-q)^{\frac{j(j+1)}{2}}\bigg).
	\end{align*}
By \eqref{Eq4} and \eqref{Eq5}, we have
\begin{align*}
\sum_{n=0}^{\infty}\frac{1-(-1)^n}{2}p_o(n)q^n & =\bigg(\prod_{k=1}^{\infty}\frac{1}{1-q^{2k}}\bigg)\frac{1}{2}\bigg(\sum_{i=0}^{\infty}2q^{8i^2-6i+1}+2q^{8i^2-10i+3}\bigg)\\
&= \sum_{k=0}^{\infty} p(k)q^{2k} \sum_{j=0}^{\infty}q^{t^o_j} =\sum_{k=0}^{\infty}\sum_{j=0}^{\infty}p(k)q^{2k+t^o_j}.
\end{align*}
Extracting the coefficient of $q^n$ in the identity	above, we obtain
\begin{align*}
\sum_{n=0}^{\infty}p_o(n)q^n=\sum_{n=0}^{\infty}\bigg(\sum_{j=0}^{\infty}p((n-t^o_j)/2)\bigg)q^n,
\end{align*}
from which \eqref{Eq3} follows.
\end{proof}

We recall that an overpartition of $n$, introduced in \cite{C-L}, is a partition  in which the first occurrence of a number may be overlined. For instance, there are eight overpartitions of 3:
$$3, \overline{3}, 2+1, \overline{2}+1, 2+\overline{1}, \overline{2}+\overline{1}, 1+1+1, \overline{1}+1+1.$$
In the next three results, we present recurrence relations for the number of overpartitions of $n$, denoted by $\overline{p}(n)$.

\begin{theorem}	For all $n \geq 0$, we have
\begin{align*}
&\overline{p}(n)-2\overline{p}(n-1)+2\overline{p}(n-4)-2\overline{p}(n-9)+2\overline{p}(n-16)-2\overline{p}(n-25)+\\
&2\overline{p}(n-36)-\cdots+2(-1)^j\overline{p}(n-j^2)+\cdots=
 \begin{cases} 1, & \text{if $n=0$,} \\ 0, & \text{otherwise.} \end{cases}
\end{align*}
\label{Teo3.3}
\end{theorem}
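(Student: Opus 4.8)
The plan is to recognize the left-hand side as the coefficient of $q^n$ in a product of two generating functions and to show that this product collapses to $1$. First I would recall the standard generating function for overpartitions,
$$\sum_{n=0}^{\infty} \overline{p}(n)q^n = \frac{(-q;q)_\infty}{(q;q)_\infty},$$
and observe that the coefficient sequence $1,-2,2,-2,2,\ldots$ attached to the shifts by $j^2$ is exactly the coefficient sequence of the theta series $\sum_{j=-\infty}^{\infty}(-1)^j q^{j^2}$: since $(-1)^j=(-1)^{-j}$ and $j^2=(-j)^2$, the term $j=0$ contributes the isolated $1$ while each pair $\pm j$ with $j\geq 1$ contributes $2(-1)^j q^{j^2}$. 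Hence the asserted recurrence is equivalent to the single identity
$$\left(\sum_{n=0}^{\infty} \overline{p}(n)q^n\right)\left(\sum_{j=-\infty}^{\infty}(-1)^j q^{j^2}\right) = 1.$$

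Next I would put the theta series into product form. Specializing the Jacobi triple product identity at $z=-1$ gives $\sum_{j=-\infty}^{\infty}(-1)^j q^{j^2}=(q;q^2)_\infty^2(q^2;q^2)_\infty$, and using the split $(q;q^2)_\infty=(q;q)_\infty/(q^2;q^2)_\infty$ this simplifies to $(q;q)_\infty^2/(q^2;q^2)_\infty$. Multiplying by the overpartition generating function then yields
$$\frac{(-q;q)_\infty}{(q;q)_\infty}\cdot\frac{(q;q)_\infty^2}{(q^2;q^2)_\infty}=\frac{(-q;q)_\infty(q;q)_\infty}{(q^2;q^2)_\infty}.$$
The numerator telescopes by the classical factorization $(-q;q)_\infty(q;q)_\infty=(q^2;q^2)_\infty$, which follows termwise from $(1+q^k)(1-q^k)=1-q^{2k}$, so the whole product equals $1$. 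Extracting the coefficient of $q^n$ gives the claimed recurrence, with right-hand side $1$ when $n=0$ and $0$ otherwise.

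The computation is essentially routine once it is set up; the only genuinely substantive step is the initial recognition that the coefficient pattern encodes the symmetric theta series $\sum_{j=-\infty}^{\infty}(-1)^j q^{j^2}$, which is precisely what makes the Jacobi triple product applicable here. After that, the $q$-Pochhammer manipulations are forced and short, and I do not anticipate any real obstacle beyond keeping the bookkeeping of odd and even factors straight.
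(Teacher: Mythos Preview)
Your proof is correct and follows essentially the same route as the paper: both arguments reduce the recurrence to the generating-function identity $\bigl(\sum_{n\ge 0}\overline{p}(n)q^n\bigr)\cdot\varphi(-q)=1$ and then verify this by $q$-product manipulations. The only cosmetic difference is that you invoke the Jacobi triple product to obtain $\varphi(-q)=(q;q)_\infty^2/(q^2;q^2)_\infty$ and cancel, whereas the paper factors $(q;q)_\infty=\varphi(-q)\,(-q;q)_\infty$ directly; these are two ways of writing the same elementary identity.
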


\begin{proof} We recall from \cite{C-L} that the generating function for overpartitions is given by
\begin{align*}
\sum_{n=0}^{\infty}\overline{p}(n)q^n=\prod_{k=1}^{\infty}\frac{(1+q^k)}{(1-q^k)}.
\end{align*}

We note that
\begin{align*}
\prod_{k=1}^{\infty}(1-q^k)&=\prod_{k=1}^{\infty}(1-q^k)\frac{(1-q^{2k-1})}{(1-q^{2k-1})}\frac{(1-q^{2k})}{(1-q^{2k})}\\
&=\prod_{k=1}^{\infty}\frac{(1-q^k)(1-q^{2k-1})(1-q^{2k})}{(1-q^{2k})(1-q^{2k-1})}\\
&=\prod_{k=1}^{\infty}\frac{(1-q^{2k-1})^2(1-q^{2k})}{(1-q^{2k-1})}\\
&=\varphi(-q)\prod_{k=1}^{\infty}\frac{1}{(1-q^{2k-1})}.
\end{align*}
By Euler's identity, we have
\begin{align*}
\prod_{k=1}^{\infty}(1-q^k)=\varphi(-q)\prod_{k=1}^{\infty}(1+q^k),
\end{align*}
and, then,
\begin{align*}
\prod_{k=1}^{\infty}\frac{1}{(1-q^k)}=\frac{1}{\varphi(-q)}\prod_{k=1}^{\infty}\frac{1}{(1+q^k)}.
\end{align*}
Hence we obtain the following equivalent equalities
\begin{align*}
\sum_{n=0}^{\infty}\overline{p}(n)q^n&=\frac{\prod_{k=1}^{\infty}(1+q^k)}{\varphi(-q)\prod_{k=1}^{\infty}(1+q^k)}\\
\varphi(-q)\sum_{n=0}^{\infty}\overline{p}(n)q^n&=1\\
\bigg(1+2\sum_{k=1}^{\infty}(-1)^kq^{k^2}\bigg)\sum_{n=0}^{\infty}\overline{p}(n)q^n&=1\\
\sum_{n=0}^{\infty}\bigg(\overline{p}(n)q^n+2\sum_{k=1}^{\infty}(-1)^k\overline{p}(n)q^{k^2+n}\bigg)&=1 \\
\sum_{n=0}^{\infty}\bigg(\overline{p}(n)+2\sum_{k=1}^{\infty}(-1)^k\overline{p}(n-k^2)\bigg)q^n&=1.
\end{align*}
The result now follows from the last equality.
\end{proof}

Our second recurrence for $\overline{p}(n)$ involves $p_d(n)$.

\begin{theorem}	For all $n \geq 1$, we have
\begin{align*}
\overline{p}(n)&-\overline{p}(n-1)-\overline{p}(n-2)+\overline{p}(n-5)+\overline{p}(n-7)-\cdots\\
&\cdots+(-1)^j\left( \overline{p}(n-j(3j-1)/2)+\overline{p}(n-j(3j+1)/2)\right)+\cdots= p_d(n).
\end{align*}
\label{Teo3.4}
\end{theorem}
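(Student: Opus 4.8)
The plan is to observe that the alternating sum on the left-hand side is precisely the coefficient of $q^n$ in the product $(q;q)_\infty \sum_{n=0}^\infty \overline{p}(n)q^n$, with the pentagonal-number shifts arising from Euler's pentagonal number theorem \eqref{Euler}. So the whole argument reduces to a single generating-function cancellation followed by a routine extraction of coefficients.

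First I would recall from \cite{C-L} that $\sum_{n=0}^\infty \overline{p}(n)q^n = \prod_{k=1}^\infty \frac{1+q^k}{1-q^k}$, and multiply both sides by $(q;q)_\infty = \prod_{k=1}^\infty (1-q^k)$. The factor $\prod_{k=1}^\infty(1-q^k)$ cancels the denominator of the overpartition product, leaving
\begin{align*}
(q;q)_\infty \sum_{n=0}^\infty \overline{p}(n)q^n = \prod_{k=1}^\infty (1+q^k) = \sum_{n=0}^\infty p_d(n)q^n,
\end{align*}
where the last equality is the classical generating function for partitions into distinct parts. This identifies the right-hand side of the theorem.

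Next I would expand the left-hand factor $(q;q)_\infty$ using \eqref{Euler} as $\sum_{m=-\infty}^\infty (-1)^m q^{m(3m-1)/2}$, pairing the indices $m=j$ and $m=-j$ to obtain the terms $(-1)^j\bigl(q^{j(3j-1)/2}+q^{j(3j+1)/2}\bigr)$ together with the $m=0$ term $1$. Multiplying this series by $\sum_{n=0}^\infty \overline{p}(n)q^n$ and extracting the coefficient of $q^n$ then yields exactly the alternating combination of $\overline{p}\bigl(n-j(3j\mp1)/2\bigr)$ appearing in the statement, which equals $p_d(n)$. I do not expect any genuine obstacle here: the only point requiring a little care is the index bookkeeping that matches $m(3m-1)/2$ with the generalized pentagonal numbers $j(3j\pm1)/2$, and the identity then follows immediately from the cancellation above combined with \eqref{Euler}.
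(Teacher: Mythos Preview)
Your proposal is correct and essentially identical to the paper's proof: both arguments amount to the single identity $(q;q)_\infty\sum_{n\ge0}\overline{p}(n)q^n=\prod_{k\ge1}(1+q^k)=\sum_{n\ge0}p_d(n)q^n$, followed by expanding $(q;q)_\infty$ via Euler's pentagonal number theorem \eqref{Euler} and comparing coefficients of $q^n$. The paper merely presents the factorization starting from $\prod_{k\ge1}(1+q^k)$ rather than from the overpartition side, which is the same computation read in the opposite direction.
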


\begin{proof} We have
	\begin{align*}
	\prod_{k=1}^{\infty}(1+q^k)&=\prod_{k=1}^{\infty}\frac{(1+q^k)}{(1-q^k)}\prod_{k=1}^{\infty}(1-q^k)\\
	&= \sum_{k=1}^{\infty}\overline{p}(k)q^k \sum_{k=-\infty}^{\infty}(-1)^jq^{\frac{j(3j-1)}{2}} \\
	&= \sum_{k=0}^{\infty}\overline{p}(k)q^k \bigg(1+\sum_{j=1}^{\infty}(-1)^jq^{\frac{j(3j-1)}{2}}+\sum_{j=1}^{\infty}(-1)^jq^{\frac{j(3j+1)}{2}}\bigg)\\
	&=\sum_{k=0}^{\infty}\bigg(\overline{p}(k)q^k+\sum_{j=1}^{\infty}(-1)^j\overline{p}(k)q^{k+\frac{j(3j-1)}{2}}+\sum_{j=1}^{\infty}(-1)^j\overline{p}(k)q^{k+\frac{j(3j+1)}{2}}\bigg).
	\end{align*}
Therefore,
\begin{align*}
\sum_{n=0}^{\infty}&p_d(n)q^n =\sum_{n=0}^{\infty}\bigg(\overline{p}(n)+\sum_{j=1}^{\infty}(-1)^j\left( \overline{p}(n-j(3j-1)/2)+\overline{p}(n-j(3j+1)/2)\right) \bigg)q^n,
\end{align*}
from which the proof follows by comparing coefficients of $q^n$ on both sides of the last equation.
\end{proof}

If $\overline{p_d}(n)$ denotes the number of overpartitions of $n$ into distinct parts, then we have the following recurrence for $\overline{p}(n)$.
	
	\begin{theorem} For all $n \geq 0$, we have
		\begin{align*}
		\overline{p}(n)-& \overline{p}(n-2)-\overline{p}(n-4)+\overline{p}(n-10)+\overline{p}(n-14)-\cdots\\
		\cdots&+(-1)^j\left( \overline{p}(n-j(3j-1))+\overline{p}(n-j(3j+1)) \right) +\cdots = \overline{p_d}(n).
		\end{align*}
	\end{theorem}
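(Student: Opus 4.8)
The plan is to follow exactly the generating-function strategy used in the proofs of Theorems~\ref{Teo3.3} and \ref{Teo3.4}, replacing Euler's pentagonal number theorem \eqref{Euler} by its image under the substitution $q\mapsto q^2$. The signs $(-1)^j$ attached to the exponents $j(3j-1)$ and $j(3j+1)$ are precisely the coefficients of \eqref{Euler} with $q$ replaced by $q^2$, so the entire left-hand side is the coefficient of $q^n$ extracted from
\[
\left(\sum_{n=0}^{\infty}\overline{p}(n)q^n\right)(q^2;q^2)_\infty .
\]
The whole task is therefore to prove that this product equals $\sum_{n\ge 0}\overline{p_d}(n)q^n$.

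First I would record the generating function for overpartitions into distinct parts. Reading ``into distinct parts'' as distinctness of the overlined parts and of the non-overlined parts separately (so a given size may occur once overlined and once not), each part size $k$ contributes an independent factor $(1+q^k)$ from its possible overlined copy and $(1+q^k)$ from its possible non-overlined copy, whence
\[
\sum_{n=0}^{\infty}\overline{p_d}(n)q^n=\prod_{k=1}^{\infty}(1+q^k)^2 .
\]
Next I would factor this product so as to expose the overpartition generating function $\prod_{k\ge1}\frac{1+q^k}{1-q^k}$ recalled in the proof of Theorem~\ref{Teo3.3}. Using $(1-q^k)(1+q^k)=1-q^{2k}$, I would write
\[
\prod_{k=1}^{\infty}(1+q^k)^2=\prod_{k=1}^{\infty}\frac{1+q^k}{1-q^k}\prod_{k=1}^{\infty}(1-q^k)(1+q^k)=\left(\sum_{n=0}^{\infty}\overline{p}(n)q^n\right)\prod_{k=1}^{\infty}(1-q^{2k}),
\]
which is the exact analogue of the opening display in the proof of Theorem~\ref{Teo3.4}, with $(q;q)_\infty$ replaced by $(q^2;q^2)_\infty$.

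To finish, I would expand $\prod_{k\ge1}(1-q^{2k})=(q^2;q^2)_\infty$ by applying \eqref{Euler} with $q$ replaced by $q^2$, obtaining $\sum_{j=-\infty}^{\infty}(-1)^jq^{j(3j-1)}=1+\sum_{j=1}^{\infty}(-1)^j\bigl(q^{j(3j-1)}+q^{j(3j+1)}\bigr)$, then multiply this series against $\sum_{k}\overline{p}(k)q^k$ and read off the coefficient of $q^n$, just as in Theorem~\ref{Teo3.4}. The doubling $q\mapsto q^2$ turns the pentagonal numbers into the exponents $2,4,10,14,24,30,\dots$ appearing in the statement, so the bookkeeping is immediate.

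The step requiring the most care is the very first one: pinning down the correct generating function $\prod_{k\ge1}(1+q^k)^2$ for $\overline{p_d}(n)$. One must interpret ``distinct parts'' as distinctness of the overlined and of the non-overlined parts separately (equivalently, each overpartition into distinct parts is an independent pair of partitions into distinct parts, giving $(-q;q)_\infty^2$); it is exactly this reading that makes $(1-q^k)(1+q^k)=1-q^{2k}$ close the identity up to $(q^2;q^2)_\infty$. Everything after that is routine series manipulation identical in spirit to the preceding proofs.
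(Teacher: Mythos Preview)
Your proposal is correct and follows essentially the same route as the paper: both start from $\sum_{n\ge0}\overline{p_d}(n)q^n=\prod_{k\ge1}(1+q^k)^2$, rewrite this as $\left(\sum_{k\ge0}\overline{p}(k)q^k\right)(q^2;q^2)_\infty$ via $(1+q^k)(1-q^k)=1-q^{2k}$, expand $(q^2;q^2)_\infty$ by \eqref{Euler} with $q\mapsto q^2$, and extract the coefficient of $q^n$. Your extra care in justifying the generating function $\prod_{k\ge1}(1+q^k)^2$ is well placed, since the paper simply asserts it.
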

	
	\begin{proof} By \eqref{Euler} we have
		\begin{align*}
		\sum_{k=0}^{\infty} \overline{p_d}(n) q^n&=\prod_{k=1}^{\infty}(1+q^{k})^2 =\prod_{k=1}^{\infty}\frac{(1-q^{2k})(1+q^k)}{1-q^{k}} \\
		&=(q^2;q^2)_\infty \sum_{k=0}^{\infty} \overline{p}(k)q^k =\sum_{k=0}^{\infty} \overline{p}(k)q^k \sum_{j=-\infty}^{\infty} (-1)^jq^{j(3j-1)}  \\
		&= \sum_{k=0}^{\infty} \overline{p}(k)q^k \bigg(1+\sum_{j=1}^{\infty}(-1)^jq^{j(3j-1)}+\sum_{j=1}^{\infty}(-1)^jq^{j(3j+1)}\bigg)\\
		&=\sum_{n=0}^{\infty}\bigg(\overline{p}(n)+\sum_{j=1}^{\infty}(-1)^j\left( \overline{p}(n-j(3j-1))+\overline{p}(n-j(3j+1))\right) \bigg)q^n.
		\end{align*}	
		Thus, the result follows from extracting the coefficient of $q^n$ on both sides of the last equality.
\end{proof}

Now we prove a recurrence relations satisfied by $qq(n)$, the number of partitions of $n$ into distinct odd parts.

\begin{theorem} For all $n \geq 1$, we have
\begin{align*}
qq(n)-qq(n-4)-qq(n-8)&+qq(n-20)+qq(n-28)-qq(n-48)-\\
qq(n-60)+\cdots+(-1)^j & \left( qq(n-2j(3j-1))+qq(n-2j(3j+1)) \right) \\
&\cdots= \begin{cases} 1, & \text{if $n$ is a triangular number,} \\ 0, & \text{otherwise.} \end{cases}
\end{align*}
\label{Teo3.5}
\end{theorem}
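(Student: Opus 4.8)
The plan is to recognize the left-hand side as a coefficient extraction from the product of the generating function for $qq(n)$ with a dilated form of Euler's pentagonal series, and then to collapse that product to $\psi(q)$, whose coefficients are exactly the indicator of the triangular numbers.

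First I would record the generating function for partitions into distinct odd parts,
$$\sum_{n=0}^{\infty} qq(n)\, q^n = \prod_{k=1}^{\infty} (1+q^{2k-1}) = (-q;q^2)_\infty,$$
since each odd part may be chosen at most once. Next I would apply Euler's pentagonal number theorem \eqref{Euler} with $q$ replaced by $q^4$ to obtain
$$(q^4;q^4)_\infty = \sum_{j=-\infty}^{\infty} (-1)^j q^{2j(3j-1)} = 1 + \sum_{j=1}^{\infty} (-1)^j \left( q^{2j(3j-1)} + q^{2j(3j+1)} \right),$$
so that the exponents $2j(3j\mp 1)$ appearing in the statement are precisely four times the pentagonal numbers. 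Multiplying these two series and collecting the coefficient of $q^n$ reproduces exactly the alternating sum on the left-hand side of the claimed recurrence.

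The heart of the argument is then the product identity $(-q;q^2)_\infty\,(q^4;q^4)_\infty = \psi(q)$. To establish it I would first use $(1-q^{2k-1})(1+q^{2k-1}) = 1-q^{4k-2}$ to write $(-q;q^2)_\infty = (q^2;q^4)_\infty / (q;q^2)_\infty$, and then split $(q^2;q^2)_\infty$ according to even and odd index to get $(q^2;q^4)_\infty\,(q^4;q^4)_\infty = (q^2;q^2)_\infty$. Combining the two observations yields
$$(-q;q^2)_\infty\,(q^4;q^4)_\infty = \frac{(q^2;q^2)_\infty}{(q;q^2)_\infty},$$
which equals $\psi(q)$ by \eqref{eq7}. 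Since $\psi(q) = \sum_{m\geq 0} q^{m(m+1)/2}$ by \eqref{eq3}, the coefficient of $q^n$ in $\psi(q)$ is $1$ exactly when $n$ is triangular and $0$ otherwise.

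Finally I would extract the coefficient of $q^n$ from both sides of $(-q;q^2)_\infty\,(q^4;q^4)_\infty = \psi(q)$ and read off the recurrence. The only real obstacle is the bookkeeping in the product identity: everything reduces to elementary regroupings of infinite products together with the two facts quoted in the preliminaries, so I expect no substantive difficulty beyond keeping the $q^2$ versus $q^4$ subscripts straight.
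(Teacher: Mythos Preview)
Your argument is correct. Both your proof and the paper's hinge on the same underlying identity
\[
(-q;q^2)_\infty\,(q^4;q^4)_\infty \;=\; \psi(q),
\]
and both extract the coefficient of $q^n$ on each side. The difference lies in how this identity is reached. The paper first rewrites $\prod_{k\ge1}(1+q^{2k-1}) = \prod_{k\ge1}\frac{(1+q^k)(1-q^{2k})}{1-q^{4k}}$, so that $(q^4;q^4)_\infty\sum_j qq(j)q^j = (q^2;q^2)_\infty\sum_i p_d(i)q^i$; it then expands the right side via the pentagonal number theorem and invokes an external result (Theorem~1 of Ono--Robbins--Wilson \cite{Ono}) for the $p_d$-recurrence to identify the right side with the indicator of triangular numbers. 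Your route is more direct and entirely self-contained: the two elementary factorisations $(-q;q^2)_\infty=(q^2;q^4)_\infty/(q;q^2)_\infty$ and $(q^2;q^4)_\infty(q^4;q^4)_\infty=(q^2;q^2)_\infty$ immediately give $\psi(q)$ via \eqref{eq7}, with no appeal to $p_d(n)$ or to the cited paper. The trade-off is that the paper's detour exhibits a pleasant link between the $qq$- and $p_d$-recurrences, whereas your version is shorter and stays within the preliminaries already set up.
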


\begin{proof} It is easy to see that
\begin{align*}
\prod_{k=1}^{\infty}(1+q^{2k-1})&=\prod_{k=1}^{\infty}\frac{(1+q^k)}{(1+q^{2k})} =\prod_{k=1}^{\infty}\frac{(1+q^k)(1-q^{2k})}{(1-q^{4k})}.
\end{align*}
Thus,
\begin{align*}
\sum_{j=0}^{\infty}qq(j)q^j=\prod_{k=1}^{\infty}\frac{(1+q^k)(1-q^{2k})}{(1-q^{4k})},
\end{align*}
which can be rewritten as
\begin{align*}
\prod_{k=1}^{\infty}(1-q^{4k})\sum_{j=0}^{\infty}qq(j)q^j&=\sum_{i=0}^{\infty}p_d(i)q^i\prod_{k\geq1}(1-q^{2k}).
\end{align*}
That is to say
\begin{align*}
(q^4;q^4)_\infty\sum_{j=0}^{\infty}qq(j)q^j&=(q^2;q^2)_\infty \sum_{i=0}^{\infty}p_d(i)q^i.
\end{align*}
Then, by \eqref{Euler}, we have
\begin{align*}
(q^4;q^4)_\infty&\sum_{j=0}^{\infty} qq(j)q^j\\
&=\bigg(1+\sum_{k=1}^{\infty}(-1)^kq^{2j(3j-1)}+\sum_{k=1}^{\infty}(-1)^kq^{2j(3j+1)}\bigg)\sum_{j=0}^{\infty} qq(j)q^j\\
&=\sum_{n=0}^{\infty}\bigg(qq(n)+\sum_{k=0}^{\infty}(-1)^k\left( qq(n-2j(3j-1))+qq(n-2j(3j+1))\right) \bigg)q^n.
\end{align*}
On the other hand, we have
\begin{align*}
(q^2;q^2)_\infty & \sum_{i=0}^{\infty}p_d(i)q^i \\
&=\sum_{i=0}^{\infty}p_d(i)q^i \bigg(1+\sum_{k=1}^{\infty}(-1)^kq^{k(3k-1)}+\sum_{k=1}^{\infty}(-1)^kq^{k(3k+1)}\bigg)\\
&=\sum_{n=0}^{\infty}\bigg(p_d(n)+\sum_{k=1}^{\infty}(-1)^k\left( p_d(n-k(3k-1))+p_d(n-k(3k+1))\right) \bigg)q^n.
\end{align*}
Hence,
\begin{align*}
&\sum_{n=0}^{\infty}\bigg(qq(n)+\sum_{k=1}^{\infty}(-1)^k\left( qq(n-2j(3j-1))+qq(n-2j(3j+1))\right) \bigg)q^n\\
&=\sum_{n=0}^{\infty}\bigg(p_d(n)+\sum_{k=1}^{\infty}(-1)^k\left( p_d(n-k(3k-1))+p_d(n-k(3k+1))\right) \bigg)q^n.
\end{align*}
The result follows from extracting the coefficient of $q^n$ on both sides of the last equation and using Theorem 1 of \cite{Ono}.
\end{proof}

The next theorem presents a recurrence for the number of partitions into odd parts, denoted by $p_o(n)$.

\begin{theorem} For all $n \geq 0$, we have
\begin{align*}
p_o(n)-&p_o(n-1)-p_o(n-5)+p_o(n-8)+p_o(n-16)-\cdots\\
\cdots+&(-1)^j\big[p_o(n-j(3j-2))+p_o(n-j(3j+2))\big]+\cdots\\
\cdots&= \begin{cases} 1, & \text{if $n$ is  3 times a triangular number,} \\ 0, & \text{otherwise.} \end{cases}
\end{align*}
\label{Teo3.6}
\end{theorem}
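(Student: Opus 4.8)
The plan is to follow the generating-function strategy used in the earlier theorems. The natural starting point is Euler's product for partitions into odd parts,
\begin{align*}
\sum_{n=0}^{\infty} p_o(n) q^n = \prod_{k=1}^{\infty} \frac{1}{1-q^{2k-1}} = \frac{1}{(q;q^2)_\infty}.
\end{align*}
First I would identify the series that supplies the coefficients on the left-hand side of the recurrence. The exponents occurring are $j(3j-2)=3j^2-2j$ and $j(3j+2)=3j^2+2j$ carrying the sign $(-1)^j$, and these are precisely the terms of the bilateral sum $\sum_{j=-\infty}^{\infty}(-1)^j q^{3j^2-2j}$, the $+2j$ branch being produced by the negative indices. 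Hence the left-hand side of the theorem is the coefficient sequence of the product $\frac{1}{(q;q^2)_\infty}\sum_{j=-\infty}^{\infty}(-1)^j q^{3j^2-2j}$, and it suffices to prove that this product equals $\psi(q^3)=\sum_{m=0}^{\infty} q^{3m(m+1)/2}$, the generating function for three times a triangular number.

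The key step is to put the theta sum into product form. Matching the exponent $3j^2-2j$ against $\frac{n(n+1)}{2}\alpha+\frac{n(n-1)}{2}\beta$ coming from $a=-q^{\alpha}$, $b=-q^{\beta}$ in the definition \eqref{eq1} of $f(a,b)$, one is led to $\alpha=1$, $\beta=5$, and the accompanying sign works out to $(-1)^{n^2}=(-1)^n$, so that
\begin{align*}
\sum_{j=-\infty}^{\infty}(-1)^j q^{3j^2-2j} = f(-q,-q^5).
\end{align*}
Applying the Jacobi triple product in the form \eqref{eq5} with these values, for which $ab=q^6$, then gives
\begin{align*}
f(-q,-q^5) = (q;q^6)_\infty (q^5;q^6)_\infty (q^6;q^6)_\infty.
\end{align*}

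It remains to simplify the resulting product. Splitting the odd residues modulo $6$ yields $(q;q^2)_\infty=(q;q^6)_\infty(q^3;q^6)_\infty(q^5;q^6)_\infty$, and replacing $q$ by $q^3$ in \eqref{eq7} gives $\psi(q^3)=(q^6;q^6)_\infty/(q^3;q^6)_\infty$. Combining these,
\begin{align*}
\frac{1}{(q;q^2)_\infty}\, f(-q,-q^5) = \frac{(q;q^6)_\infty (q^5;q^6)_\infty (q^6;q^6)_\infty}{(q;q^6)_\infty(q^3;q^6)_\infty(q^5;q^6)_\infty} = \frac{(q^6;q^6)_\infty}{(q^3;q^6)_\infty} = \psi(q^3),
\end{align*}
exactly as required. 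Extracting the coefficient of $q^n$ from both sides then produces the stated recurrence, the right-hand side being $1$ precisely when $n$ is three times a triangular number and $0$ otherwise. The only genuinely nontrivial step is recognizing the correct theta function and pinning down the parameters $a=-q$, $b=-q^5$ so that \eqref{eq5} applies; everything after that is routine cancellation of infinite products.
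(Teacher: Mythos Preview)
Your proof is correct and follows essentially the same approach as the paper: both identify the bilateral theta sum as $f(-q,-q^5)$ via the choice $a=-q$, $b=-q^5$, invoke the Jacobi triple product \eqref{eq5} to obtain $(q;q^6)_\infty(q^5;q^6)_\infty(q^6;q^6)_\infty$, and then cancel against $(q;q^2)_\infty$ by splitting the odd residues modulo~$6$ to arrive at $\psi(q^3)$. The only difference is cosmetic ordering---the paper works from the theta product toward $\psi(q^3)$, while you frame it as showing $\frac{1}{(q;q^2)_\infty}f(-q,-q^5)=\psi(q^3)$---but the substance is identical.
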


\begin{proof} Setting $a=-q$ and $b=-q^5$ in \eqref{eq1} and \eqref{eq5} we obtain
\begin{align*}
(q;q^6)_\infty(q^5;q^6)_\infty(q^6;q^6)_\infty =	f(-q,-q^5)=\sum_{j=-\infty}^{\infty} (-q)^{\frac{j(j+1)}{2}}(-q^5)^{\frac{j(j-1)}{2}},
\end{align*}
from which it follows that
\begin{align}
\sum_{j=-\infty}^{\infty} (-1)^jq^{j(3j-2)}&=\prod_{k=1}^{\infty}(1-q^{6k-5})(1-q^{6k-1})(1-q^{6k}) \label{eq35} \\      &=\prod_{k=1}^{\infty}(1-q^{6k-5})(1-q^{6k-1})(1-q^{6k})\frac{(1-q^{6k-3})}{(1-q^{6k-3})} \nonumber\\
&=\prod_{k=1}^{\infty}\frac{(1-q^{2k-1})(1-q^{6k})}{(1-q^{6k-3})}.\nonumber
\end{align}
Then, by \eqref{eq3}, we have
\begin{align*}
\psi(q^3) & = \prod_{k=1}^{\infty}\frac{1}{(1-q^{2k-1})}\sum_{j=-\infty}^{\infty}(-1)^jq^{j(3j-2)}\\
& =  \sum_{i=0}^{\infty}p_o(i)q^i \bigg(1+\sum_{j=1}^{\infty}(-1)^jq^{j(3j-2)}+\sum_{j=1}^{\infty}(-1)^jq^{j(3j+2)}\bigg) \\
& = \sum_{n=0}^{\infty}\bigg(p_o(n)+\sum_{j=1}^{\infty}(-1)^j\left( p_o(n-j(3j-2))+p_o(n-j(3j+2))\right) \bigg)q^n.
\end{align*}
The result follows by comparing the coefficients of $q^n$ on both sides of the last expression.
\end{proof}

Let $\ell$ be a positive integer. A partition of $n$ having no part divisible by $\ell$ is called an $\ell$-regular partition of $n$. Let $b_{\ell}(n)$ denote the number of $\ell$-regular partition of $n$. The generating function of $b_{\ell}(n)$ is 
\begin{equation*}
\sum_{n=0}^{\infty} b_{\ell}(n)q^n = \frac{(q^{\ell};q^{\ell})_{\infty}}{(q;q)_{\infty}}.
\end{equation*}
Our next result is a recurrence relation for $p(n)$ involving $b_{\ell}(n)$.

\begin{theorem} Let $\ell \geq 1$. For all $n \geq 0$, we have
	\begin{align*}
	&p(n)-p(n-\ell)-p(n-2\ell)+p(n-5\ell)+p(n-7\ell)-\cdots\\
	&\cdots+(-1)^j\big[p(n-\ell j(3j-1)/2)+p(n-\ell j(3j+1)/2)\big]+\cdots=b_{\ell}(n).
	\end{align*}
	\label{Teo3.7}
\end{theorem}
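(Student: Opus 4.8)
The plan is to proceed exactly as in the classical derivation of Euler's recurrence, but with $q$ replaced by $q^{\ell}$ inside the pentagonal product. First I would rewrite the generating function for $b_{\ell}(n)$ as a product of two series. Since
\begin{align*}
\sum_{n=0}^{\infty} b_{\ell}(n)q^n = \frac{(q^{\ell};q^{\ell})_{\infty}}{(q;q)_{\infty}} = (q^{\ell};q^{\ell})_{\infty}\sum_{n=0}^{\infty} p(n)q^n,
\end{align*}
the whole task reduces to expanding $(q^{\ell};q^{\ell})_{\infty}$ as a power series in $q$.

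Next I would apply Euler's pentagonal number theorem \eqref{Euler} with $q$ replaced by $q^{\ell}$, which gives
\begin{align*}
(q^{\ell};q^{\ell})_{\infty} = \sum_{j=-\infty}^{\infty}(-1)^j q^{\ell j(3j-1)/2} = 1 + \sum_{j=1}^{\infty}(-1)^j\left(q^{\ell j(3j-1)/2}+q^{\ell j(3j+1)/2}\right),
\end{align*}
where the last equality comes from isolating the $j=0$ term and pairing each positive index $j$ with its negative counterpart; replacing $j$ by $-j$ in the negative half of the bilateral sum turns the exponent $\ell j(3j-1)/2$ into $\ell j(3j+1)/2$, exactly as in the expansions used for Theorems \ref{Teo3.4} and \ref{Teo3.5}.

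Then I would multiply this expansion by $\sum_{n\geq 0} p(n)q^n$ and extract the coefficient of $q^n$ on both sides. The constant term $1$ contributes $p(n)$, while the pair indexed by $j$ contributes $(-1)^j\big(p(n-\ell j(3j-1)/2)+p(n-\ell j(3j+1)/2)\big)$, yielding precisely the stated recurrence after comparing coefficients.

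The computation is entirely routine and follows the same template as the earlier proofs in this section; I do not anticipate any genuine obstacle. The only points requiring a moment of care are the sign and exponent bookkeeping in the bilateral-to-unilateral conversion of the pentagonal sum, and confirming that the small exponents displayed in the statement ($\ell,2\ell,5\ell,7\ell,\ldots$) are indeed the generalized pentagonal numbers $j(3j\mp 1)/2 = 1,2,5,7,\ldots$ scaled by $\ell$, with the signs $-,-,+,+,\ldots$ matching $(-1)^j$; both of these are immediate.
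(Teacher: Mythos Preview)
Your proposal is correct and follows essentially the same approach as the paper's own proof: both start from the generating function $\sum b_{\ell}(n)q^n = (q^{\ell};q^{\ell})_{\infty}/(q;q)_{\infty}$, apply Euler's pentagonal number theorem with $q\mapsto q^{\ell}$, split the bilateral sum into the $j=0$ term plus paired $j\ge 1$ terms, multiply by $\sum p(n)q^n$, and compare coefficients.
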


\begin{proof} We have
\begin{align*}
&\sum_{n=0}^{\infty} b_{\ell}(n)q^n\\
&=\prod_{k=1}^{\infty}\frac{(1-q^{\ell k})}{(1-q^k)}\\
&=\sum_{n=0}^{\infty}p(n)q^n\bigg(\sum_{j=-\infty}^{\infty}(-1)^jq^{\ell\frac{j(3j-1)}{2}}\bigg)\\
&=\sum_{n=0}^{\infty}p(n)q^n\bigg(1+\sum_{j=1}^{\infty}(-1)^jq^{\ell\frac{j(3j-1)}{2}}+\sum_{j=1}^{\infty}(-1)^jq^{\ell\frac{j(3j+1)}{2}}\bigg)\\
&=\sum_{n=0}^{\infty}\bigg(p(n)+\sum_{j=1}^{\infty}(-1)^j\big[p(n)(n-\ell j(3j-1)/2)+p(n-\ell j(3j+1)/2))\big]\bigg)q^n,
\end{align*}
from which the result follows.
\end{proof}

We close this section with a recurrence relation for the number of partitions of $n$ having parts congruent to $\pm c \pmod{m}$.

\begin{theorem}	Given integers $a$ and $m\geq1$, we let $p^c_m(n)$ denote the number of partitions of $n$ having parts congruent to $\pm c$ modulo $m$. Then, for all $n \geq 0$,
	\begin{align*}
	p^c_m(n)-&p^c_m(n-(m-c))-p^c_m(n-c)+p^c_m(n-(3m-2c))\\
	+p^c_m&(n-(m+2c))+\cdots+(-1)^jp^c_m(n-(mj^2+(m-2c)j)/2)\\
	&+(-1)^jp^c_m(n-(mj^2-(m-2c)j)/2)+\cdots =  \begin{cases} 1, & \text{if $n=mk_j^e$,} \\ -1, & \text{if $n=mk_j^o$,} \\ 0, & \text{otherwise,} \end{cases}
	\end{align*}
where $k_j^e$ (resp. $k_j^o$)  is the $j$-th even (resp. odd) pentagonal number.
	\label{Teo3.8}
\end{theorem}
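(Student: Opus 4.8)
The plan is to run the same generating-function/telescoping strategy used for the earlier overpartition and $p_o$ theorems, but now driven by the full two-parameter Jacobi triple product. First I would record the generating function for the target function,
\[
\sum_{n=0}^{\infty} p^c_m(n)\,q^n=\frac{1}{(q^c;q^m)_\infty\,(q^{m-c};q^m)_\infty},
\]
which simply encodes that the admissible parts are the positive integers congruent to $c$ or to $m-c\equiv -c$ modulo $m$. The shape of the left-hand side of the recurrence — a constant term together with signed pairs at the exponents $(mj^2\pm(m-2c)j)/2$ — strongly suggests that the whole recurrence amounts to multiplying this series by a \emph{single} theta function, so the first real task is to name that theta function.

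Next I would apply the Jacobi triple product in the form \eqref{eq5} with $a=-q^c$ and $b=-q^{m-c}$, so that $ab=q^m$ and
\[
f(-q^c,-q^{m-c})=(q^c;q^m)_\infty\,(q^{m-c};q^m)_\infty\,(q^m;q^m)_\infty .
\]
Expanding the \emph{same} function through the defining series \eqref{eq1} gives $f(-q^c,-q^{m-c})=\sum_{n=-\infty}^{\infty}(-1)^n q^{(mn^2-(m-2c)n)/2}$, since the sign collapses to $(-1)^{n^2}=(-1)^n$ and the $q$-exponent simplifies to $(mn^2-(m-2c)n)/2$. Splitting the summation index into $n=0$, $n=j\ge 1$, and $n=-j\le -1$ reproduces exactly the constant $1$ together with the two families $(-1)^j q^{(mj^2-(m-2c)j)/2}$ and $(-1)^j q^{(mj^2+(m-2c)j)/2}$, which is precisely the coefficient pattern appearing in the statement (for instance $j=1$ yields the pair $c,\,m-c$ with sign $-1$, and $j=2$ yields $m+2c,\,3m-2c$ with sign $+1$).

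Multiplying the generating function of $p^c_m$ by $f(-q^c,-q^{m-c})$ then cancels both infinite products in the denominator and leaves just $(q^m;q^m)_\infty$. Applying Euler's pentagonal number theorem \eqref{Euler} with $q$ replaced by $q^m$ rewrites this as $\sum_{j=-\infty}^{\infty}(-1)^j q^{m\,j(3j-1)/2}$, so the coefficient of $q^n$ on the product side is $(-1)^j$ when $n$ is $m$ times the generalized pentagonal number $j(3j-1)/2$ and $0$ otherwise; the sign $(-1)^j$ is exactly what separates the even pentagonal numbers $k_j^e$ (coefficient $+1$) from the odd ones $k_j^o$ (coefficient $-1$). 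Comparing the coefficients of $q^n$ on the two sides of the identity then yields the stated recurrence.

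The main obstacle is the middle step: one must recognize that the prescribed recurrence coefficients are precisely the series coefficients of the single theta function $f(-q^c,-q^{m-c})$, and then check that its triple-product factorization cancels against exactly the two products $(q^c;q^m)_\infty$ and $(q^{m-c};q^m)_\infty$ in the denominator. The remaining care is purely bookkeeping of exponents: verifying that the $n=+j$ and $n=-j$ contributions land on the two families $(mj^2\mp(m-2c)j)/2$, and that the residual factor $(q^m;q^m)_\infty$, read through \eqref{Euler}, produces signs $(-1)^j$ that match the even/odd pentagonal classification on the right-hand side.
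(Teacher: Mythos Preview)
Your proposal is correct and follows essentially the same route as the paper: apply the Jacobi triple product with $a=-q^{c}$, $b=-q^{m-c}$ (the paper swaps the roles of $a$ and $b$, which is immaterial since $f(a,b)=f(b,a)$) to identify the theta series $\sum_{j}(-1)^j q^{(mj^2\pm(m-2c)j)/2}$ with $(q^c;q^m)_\infty(q^{m-c};q^m)_\infty(q^m;q^m)_\infty$, multiply by the generating function of $p^c_m$, and read off $(q^m;q^m)_\infty$ via Euler's pentagonal number theorem in $q^m$. The exponent bookkeeping and the even/odd pentagonal sign analysis you outline match the paper's argument.
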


\begin{proof} Setting $c=-q^{m-c}$ and $b=-q^c$ in \eqref{eq1} and \eqref{eq5}, we obtain
\begin{align*}
(q^{m-c};q^m)_\infty(q^c;q^m)_\infty(q^m;q^m)_\infty & =	f(-q^{m-c},-q^c) \\ & =\sum_{j=-\infty}^{\infty}(-q^{m-c})^{\frac{j(j+1)}{2}}(-q^c)^{\frac{j(j-1)}{2}},
	\end{align*}
which yields
\begin{align}
\sum_{j=-\infty}^{\infty}(-1)^jq^{\frac{mj^2+(m-2c)j}{2}}&=\prod_{k=1}^\infty (1-q^{mk-c})(1-q^{mk-(m-c)})(1-q^{mk}).
\label{eq36}
\end{align}
	
The generating function for $p^c_m$ is given by
\begin{align*}
\sum_{i=0}^\infty p^c_m(i)q^i=\prod_{k=1}^\infty\frac{1}{(1-q^{mk-c})(1-q^{mk-(m-c)})}.
\end{align*}
Hence, we can rewrite \eqref{eq36} as
\begin{align*}
\prod_{k=1}^\infty \frac{1}{(1-q^{mk-c})(1-q^{mk-(m-c)})} \sum_{j=-\infty}^\infty (-1)^jq^{\frac{mj^2+(m-2c)j}{2}} &=\prod_{k=1}^\infty(1-q^{mk}),
\end{align*}
or, equivalently,
\begin{align*}
\sum_{i=0}^\infty p^c_m(i)q^i \sum_{j=-\infty}^\infty (-1)^jq^{\frac{mj^2+(m-2c)j}{2}} &=\sum_{j=-\infty}^\infty(-1)^jq^{\frac{mj(3j-1)}{2}}.
\end{align*}
This last identity yields
\begin{align*}
\sum_{i=0}^\infty p^c_m(i)q^i\bigg(1+\sum_{j=1}^\infty (-1)^jq^{\frac{mj^2+(m-2c)j}{2}}+&\sum_{j=1}^\infty (-1)^jq^{\frac{mj^2-(m-2c)j}{2}}\bigg)\\
=\sum_{n=0}^\infty \bigg(p^c_m(n)+\sum_{j=1}^\infty(-1)^jp^c_m(n-(mj^2&+(m-2c)j)/2)\\
+(-1)^j&p^c_m(n-(mj^2-(m-2c)j)/2)\bigg)q^n.
\end{align*}
Therefore,
\begin{align*}
\sum_{j=-\infty}^{\infty} (-1)^jq^{\frac{mj(3j-1)}{2}} = \sum_{n=0}^{\infty}\bigg(p^c_m(n)+\sum_{j\geq1}(-1)^j&p^c_m(n-(mj^2+(m-2c)j)/2)\\
+(-1)^j&p^c_m(n-(mj^2-(m-2c)j)/2)\bigg)q^n,
\end{align*}
which completes the proof. 
\end{proof}

As special cases of the above theorem, we have the following corollaries which provide recurrence relations for the number of partitions that appear in the well-known Rogers-Ramanujan's identities.

\begin{corollary} Let $p_{R1}(n)$ denote the number of partitions of $n$ whose parts are congruent to $\pm 1$ modulo $5$ and let $p_{R2}(n)$ denote the number of partitions of $n$ whose parts are congruent to $\pm 2$ modulo $5$. Then, for all $n \geq 0$, 
\begin{align*}
p_{R1}(n)&-p_{R1}(n-1)-p_{R1}(n-4)+p_{R1}(n-7)+p_{R1}(n-13)-\cdots \\
\cdots&+(-1)^j\big[p_{R1}(n-j(5j-3)/2)+p_{R1}(n-j(5j+3)/2)\big]+\cdots\\
&\cdots= \begin{cases} 1, & \text{if $n=5k_j^e$} \\ -1, & \text{if $n=5k_j^o$} \\ 0, & \text{otherwise,} \end{cases}
\end{align*}
and
\begin{align*}
p_{R2}&(n)-p_{R2}(n-1)-p_{R2}(n-2)+p_{R2}(n-5)+p_{R2}(n-7)-\cdots \\
\cdots&+(-1)^j\big[p_{R2}(n-j(3j-1)/2)+p_{R2}(n-j(3j+1)/2)\big]+\cdots\\
&\cdots=  \begin{cases} 1, & \text{if $n=h_j^e$} \\ -1, & \text{if $n=h_i^o$} \\ 0, & \text{otherwise,} \end{cases}
\end{align*}
where $h_j^e$ (resp. $h_j^o$) is the $j$-th heptagonal number with $j$ even (resp. odd).
\label{Corol3.1}
\end{corollary}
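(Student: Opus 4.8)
The plan is to obtain both recurrences by the same mechanism used for Theorem~\ref{Teo3.8}: start from the product generating function of the relevant partition function, multiply by an Euler/theta product that cancels the infinite product, and then compare coefficients of $q^n$. The two cases, however, call for different multipliers, and only the first is a literal specialization of Theorem~\ref{Teo3.8}.

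For $p_{R1}(n)$, whose generating function is $\prod_{k\geq1}\bigl[(1-q^{5k-1})(1-q^{5k-4})\bigr]^{-1}$, I would simply take $c=1$, $m=5$ in Theorem~\ref{Teo3.8}. Then $m-c=4$, $m-2c=3$, so the recurrence exponents $(mj^2\pm(m-2c)j)/2$ become $j(5j\pm3)/2=1,4,7,13,\dots$, and the right-hand product $(q^m;q^m)_\infty=(q^5;q^5)_\infty$ expands, via Euler's pentagonal number theorem~\eqref{Euler} applied to $q^5$, as $\sum_j(-1)^jq^{5j(3j-1)/2}$. Reading off the sign $(-1)^j$ at the generalized pentagonal exponents $5\,j(3j-1)/2$ gives $+1$ at $5k_j^e$ (even index $j$) and $-1$ at $5k_j^o$ (odd index $j$), which is exactly the claimed right-hand side. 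So nothing new has to be proved here beyond verifying the substitution.

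For $p_{R2}(n)$, with generating function $\prod_{k\geq1}\bigl[(1-q^{5k-2})(1-q^{5k-3})\bigr]^{-1}$, the stated recurrence uses \emph{pentagonal} exponents rather than the $j(5j\pm1)/2$ that the $c=2$, $m=5$ case of Theorem~\ref{Teo3.8} would produce, so a separate manipulation is needed. I would multiply the generating function by $(q;q)_\infty$ and factor the latter by residues modulo $5$,
\[
(q;q)_\infty=(q;q^5)_\infty(q^2;q^5)_\infty(q^3;q^5)_\infty(q^4;q^5)_\infty(q^5;q^5)_\infty,
\]
so that the factors $(q^2;q^5)_\infty(q^3;q^5)_\infty$ cancel against the denominator and leave
\[
(q;q)_\infty\sum_{n\geq0}p_{R2}(n)q^n=(q;q^5)_\infty(q^4;q^5)_\infty(q^5;q^5)_\infty=f(-q,-q^4),
\]
where the last equality is \eqref{eq5} with $a=-q$, $b=-q^4$. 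By \eqref{eq1}, $f(-q,-q^4)=\sum_j(-1)^jq^{j(5j-3)/2}$, whose exponents $j(5j-3)/2$ run through the generalized heptagonal numbers with sign $(-1)^j$, giving $+1$ at the even-indexed heptagonal numbers $h_j^e$ and $-1$ at the odd-indexed ones $h_j^o$. Expanding the left side by \eqref{Euler} produces the coefficients $(-1)^j$ at the pentagonal exponents $j(3j\mp1)/2$, and comparing coefficients of $q^n$ yields the recurrence.

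The routine but sign-sensitive step — the one I expect to be the main point to get right — is the theta-function evaluation $f(-q,-q^4)=\sum_j(-1)^jq^{j(5j-3)/2}$: one must check that the summand $(-q)^{j(j+1)/2}(-q^4)^{j(j-1)/2}$ produces the sign $(-1)^{j(j+1)/2+j(j-1)/2}=(-1)^{j^2}=(-1)^j$ and the exponent $\tfrac{j(j+1)}{2}+4\cdot\tfrac{j(j-1)}{2}=\tfrac{5j^2-3j}{2}$. The rest is bookkeeping: confirming that the parity of the index $j$ in the factor $(-1)^j$ matches the even/odd labelling of the pentagonal numbers $k_j^e,k_j^o$ (for $p_{R1}$) and of the heptagonal numbers $h_j^e,h_j^o$ (for $p_{R2}$), and that summing over positive and negative $j$ splits each theta series into the two families of exponents that appear in the stated recurrences.
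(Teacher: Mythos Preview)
Your argument is correct, and for the $p_{R2}$ part you have actually supplied what the paper omits. The paper presents this corollary without proof, simply declaring both recurrences to be ``special cases'' of Theorem~\ref{Teo3.8}. For $p_{R1}$ that is literally true via your $m=5$, $c=1$ substitution. But as you rightly notice, the $m=5$, $c=2$ instance of Theorem~\ref{Teo3.8} would place the shift exponents at $j(5j\pm1)/2$, not at the pentagonal numbers $j(3j\pm1)/2$ appearing in the stated $p_{R2}$ recurrence, and would put $5k_j^{e/o}$ rather than heptagonal numbers on the right. Your remedy---multiplying by $(q;q)_\infty$ instead of $f(-q^{3},-q^{2})$, cancelling the $(q^2;q^5)_\infty(q^3;q^5)_\infty$ factors, and recognizing the surviving product $(q;q^5)_\infty(q^4;q^5)_\infty(q^5;q^5)_\infty$ as $f(-q,-q^4)=\sum_j(-1)^jq^{j(5j-3)/2}$---is precisely the same Jacobi-triple-product mechanism used in the proof of Theorem~\ref{Teo3.8}, only with the roles of the two theta factors interchanged; your sign and exponent verifications are correct. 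So your proposal is not merely an alternative route: for the second recurrence it closes a gap that the paper's one-line attribution leaves open.
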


\begin{corollary} Let $s_1(n)$ denote the number of partitions of $n$ having congruent to $\pm 1$ modulo $6$ and let $s_2(n)$ denote the number of partitions of $n$ whose parts are congruent to $\pm 2$ modulo $6$. Then, for all $n \geq 0$,
\begin{align*}
s_1(n)&-s_1(n-1)-s_1(n-5)+s_1(n-8)+s_1(n-16)-\cdots \\
\cdots&+(-1)^j\big[s_1(n-j(3j-2))+s_1(n-j(3j+2))\big]+\cdots\\
=s_2&(n)-s_2(n-2)-s_2(n-4)+s_2(n-10)+s_2(n-14)-\cdots \\
\cdots&+(-1)^j\big[s_2(n-j(3j-1))+s_2(n-j(3j+1))\big]+\cdots\\
&\cdots=  \begin{cases} 1, & \text{if $n=6k_j^e$} \\ -1, & \text{if $n=6k_i^o$} \\ 0, & \text{otherwise.} \end{cases}
\end{align*}
\label{Corol3.2}
\end{corollary}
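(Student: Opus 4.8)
The plan is to deduce this corollary directly from Theorem~\ref{Teo3.8} by specializing $m=6$ twice and then noticing that the two resulting instances share one and the same right-hand side. First I would record the identifications $s_1(n)=p^1_6(n)$ and $s_2(n)=p^2_6(n)$: ``parts congruent to $\pm 1 \pmod 6$'' is the case $m=6$, $c=1$ of $p^c_m$, and ``parts congruent to $\pm 2 \pmod 6$'' is the case $m=6$, $c=2$. With these in hand the corollary is nothing but two copies of Theorem~\ref{Teo3.8} written side by side.

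Next I would carry out the two substitutions and simplify the exponents. For $m=6$, $c=1$ the shifts $m-c$, $c$, $3m-2c$, $m+2c$ become $5,1,16,8$, while the general exponents collapse to $(6j^2+4j)/2=j(3j+2)$ and $(6j^2-4j)/2=j(3j-2)$, reproducing the $s_1$-recurrence (the two summands of each $\pm$ pair are merely listed in the opposite order). For $m=6$, $c=2$ the same shifts become $4,2,14,10$ and the general exponents become $(6j^2+2j)/2=j(3j+1)$ and $(6j^2-2j)/2=j(3j-1)$, which is exactly the $s_2$-recurrence. These four arithmetic reductions are the only computations required.

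The conceptual heart of the argument---and the step I would flag as the one actually doing work---is that the right-hand side of Theorem~\ref{Teo3.8} does not depend on $c$ at all once $m$ is fixed. Indeed, in the proof of that theorem the right-hand side is read off from $\sum_{j=-\infty}^{\infty}(-1)^j q^{mj(3j-1)/2}$, and for $m=6$ this series is $\sum_{j}(-1)^j q^{6\cdot j(3j-1)/2}$ irrespective of $c$; its coefficients give $1$ at $n=6k_j^e$, $-1$ at $n=6k_j^o$, and $0$ otherwise. Hence both the $c=1$ and the $c=2$ specializations equal this common step function, and equating them through it yields the chained equality of the corollary. Because everything rests on an already-established theorem, there is no genuine obstacle here; I would simply present the four exponent simplifications compactly and let the $c$-independence of the right-hand side supply the link between the $s_1$ and $s_2$ expressions.
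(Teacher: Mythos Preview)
Your proposal is correct and is exactly the approach the paper intends: the corollary is stated as a special case of Theorem~\ref{Teo3.8}, and you have carried out precisely the two specializations $m=6$, $c=1$ and $m=6$, $c=2$, together with the observation that the right-hand side depends only on $m$. The exponent computations are all accurate, and your remark about the swapped order within each $\pm$ pair correctly handles the only cosmetic discrepancy.
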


%\section{Concluding remarks}
%\label{Sect_Concl}

\section{Acknowledgments}

This research was supported by S\~ao Paulo Research Foundation (FAPESP) (grant no. 2019/10742-0).

\end{document}